\documentclass[10pt,a4paper]{amsart}
\usepackage[latin1]{inputenc}
\usepackage{latexsym}
\usepackage{color,graphicx,shortvrb}
\usepackage{amsmath, amssymb}
\usepackage{amsfonts}
\usepackage[colorlinks, bookmarks=true]{hyperref}

\newtheorem{theorem}{Theorem}[section]
\newtheorem{lemma}[theorem]{Lemma}

\newtheorem{corollary}[theorem]{Corollary}

\theoremstyle{definition}
\newtheorem{definition}[theorem]{Definition}
\newtheorem{remark}[theorem]{Remark}

        % s{\'Y}mbolo de fin de prueba

% \newcommand{\dec}{\mathbf{D}}

\setlength{\textwidth}{16cm}       % el defecto es 14cm
\setlength{\oddsidemargin}{0.25cm}   % elimina 1em de los m\'{a}rgenes laterales
\setlength{\evensidemargin}{0.25cm}  % elimina 1em de los m\'{a}rgenes laterales
\setlength{\topmargin}{1.2cm}     % elimina espacio del margen superior

\author{J. M. Almira$^*$, Kh. F. Abu-Helaiel}
\title{A note on invariant subspaces and the solution of some classical functional equations}
\thanks{$^*$ Corresponding author}

%\thanks{The second author is partially supported by MCYT-FEDER Grant BFM2001-2871-C04-01.}
\begin{document}
\keywords{}

%Ultrametric Banach spaces, Approximation theory, Lethargy theorems, p-adic transcendental numbers.}

\subjclass[2010]{}

% 41A65, A1A25, 11J61, 11J81, 11K60.}

\begin{abstract} We study the continuous solutions of several classical functional equations by using the properties of the spaces of continuous functions which are invariant under some elementary linear trans-formations. Concretely, we use  that the sets of continuous solutions of certain equations are closed vector subspaces of $C(\mathbb{C}^d,\mathbb{C})$ which are invariant under affine transformations $T_{a,b}(f)(z)=f(az+b)$, or closed vector subspaces of $C(\mathbb{R}^d,\mathbb{R})$ which are translation and dilation invariant. These spaces have been recently classified  by Sternfeld and Weit,  and Pinkus, respectively, so that we use this information to  give a direct characterization of the continuous solutions of the corresponding functional equations. 
\end{abstract}

\maketitle

\markboth{J. M. Almira, Kh. F. Abu-Helaiel}{Invariant subspaces and some classical functional equations}
%\section{Introduction}

\section{Motivation}

We study the continuous solutions of several classical functional equations by using the properties of the spaces of continuous functions which are invariant under some elementary linear transformations.  This study was recently initiated in connection with Fr\'{e}chet's functional equation \cite{almira_invariantes}, \cite{almira_montel_vv} and, after these contributions, we thought that perhaps the theory of invariant subspaces could be also used in connection with other classical equations. In this paper we demonstrate that this connection exists. Concretely, we use  that the sets of continuous solutions of several classical functional equations are closed vector subspaces of $C(\mathbb{C}^d,\mathbb{C})$ which are invariant under ``affine'' transformations $T_{a,b}(f)(z)=f(az+b)$, or closed vector subspaces of $C(\mathbb{R}^d,\mathbb{R})$ which are translation and dilation invariant. These spaces have been recently classified   (\cite{sternfeld_weit}, \cite{pinkus_TDI}), so that we use this information to  give a direct characterization of the continuous solutions of the corresponding functional equations. The main tool for our proofs are, henceforth, the following two well known results:

\begin{theorem}[Sternfeld and Weit \cite{sternfeld_weit}] \label{SW} Assume that $V$ is a closed subspace of $C(\mathbb{C},\mathbb{C})$ and $T_{a,b}(V)\subseteq V$ for all $a,b\in\mathbb{C}$, where 
$T_{a,b}:C(\mathbb{C},\mathbb{C})\to C(\mathbb{C},\mathbb{C})$ is defined by $T_{a,b}(f)(z)=f(az+b)$. Then  $V=\overline{\mathbf{span}_{\mathbb{C}}\left(\bigcup_{k=1}^rA_{[(n_k,m_k)]}\right)}$ for a certain finite set of points $\{(n_k,m_k)\}_{k=1}^r\subseteq(\widehat{\mathbb{N}})^2$, where $\widehat{\mathbb{N}}=\mathbb{N}\cup\{+\infty\}$, $[(n,m)]=\{(\alpha,\beta)\in \widehat{\mathbb{N}}^2 :0\leq \alpha\leq n \text{ and } 0\leq \beta \leq m\}$ (here, $\alpha \leq +\infty$ means that $\alpha \in\mathbb{N}$) and, given 
$J\subseteq \mathbb{N}^2$, $A_J=\overline{\mathbf{span}_{\mathbb{C}}\{z^n\overline{z}^m: (n,m)\in J\}}$. 
\end{theorem}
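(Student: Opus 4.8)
The plan is to separate an elementary algebraic classification of the polynomial part of $V$ from an analytic ``spectral synthesis'' step establishing that this part is dense in $V$. The inclusion $\supseteq$ is immediate: each $A_{[(n,m)]}$ is closed by definition, and since $T_{a,b}$ carries $z^{\alpha}\overline{z}^{\beta}$ to $(az+b)^{\alpha}(\overline{a}\,\overline{z}+\overline{b})^{\beta}$, a polynomial of degree $\le\alpha$ in $z$ and $\le\beta$ in $\overline{z}$, every $A_{[(n,m)]}$ — and hence the closed span of any finite union of them — is $T_{a,b}$-invariant.

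For the converse, put $V_{\mathrm{fin}}=V\cap\mathbb{C}[z,\overline{z}]$, which is again affine-invariant since the affine group preserves polynomials. Applying the maps $T_{a,0}$ (and decomposing a given polynomial in $V$ under the rotation subgroup $\{T_{e^{i\theta},0}\}$ and then the dilation subgroup $\{T_{r,0}\}_{r>0}$, using Vandermonde on the distinct exponents that occur) shows that $V_{\mathrm{fin}}$ is bigraded, $V_{\mathrm{fin}}=\bigoplus_{(n,m)\in S}\mathbb{C}\,z^{n}\overline{z}^{m}$ with $S=\{(n,m):z^{n}\overline{z}^{m}\in V\}$. Applying the translations $T_{1,b}$, expanding by the binomial theorem and using the linear independence of the functions $b\mapsto b^{p}\overline{b}^{q}$, shows that $S$ is a down-set for the product order on $\mathbb{N}^{2}$. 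By Dickson's lemma the complementary up-set has finitely many minimal elements, and reading off the resulting ``staircase'' expresses $S=\bigcup_{k=1}^{r}[(n_{k},m_{k})]$ as a finite union of rectangles with $(n_{k},m_{k})\in\widehat{\mathbb{N}}^{2}$; equivalently $\overline{V_{\mathrm{fin}}}=\overline{\mathbf{span}_{\mathbb{C}}\left(\bigcup_{k=1}^{r}A_{[(n_{k},m_{k})]}\right)}$. It therefore suffices to prove the density statement $V=\overline{V_{\mathrm{fin}}}$.

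For this I would first note that convolution with smooth compactly supported mollifiers is a limit of finite linear combinations of translates $T_{1,-w}f$, hence keeps $f$ inside $V$ while regularising it, so one may work with $C^{\infty}$ elements. The rotation subgroup being compact, the averaging operators $P_{k}f(z)=\frac{1}{2\pi}\int_{0}^{2\pi}f(e^{i\theta}z)e^{-ik\theta}\,d\theta$ are continuous, map $V$ into $V$, and reconstruct $f$ through Fej\'er means, so $V=\overline{\bigoplus_{k\in\mathbb{Z}}V_{k}}$ with $V_{k}=\{f\in V:f(e^{i\theta}z)=e^{ik\theta}f(z)\}$. A function in $V_{k}$ has the form $f(z)=(z/|z|)^{k}g(|z|)$ (for $k\ge0$; conjugate for $k<0$) with $g\in C[0,\infty)$, $g(0)=0$ when $k\ne0$, and dilation invariance makes the radial profiles $g$ form a closed subspace of $C[0,\infty)$ invariant under $g(r)\mapsto g(\lambda r)$. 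Passing to the multiplicative variable and invoking the one-dimensional spectral synthesis theorem (Schwartz's theory of mean-periodic functions) exhibits this radial space as the closed span of the power functions $r^{\mu}$ and their logarithmic modifications $r^{\mu}(\log r)^{j}$ that it contains; continuity at $r=0$ then forces $\mathrm{Re}\,\mu>0$ (or $\mu=0$, $j=0$, possible only when $k=0$), since distinct exponential monomials cannot cancel at $-\infty$.

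The main obstacle is the remaining rigidity step: upgrading this to the statement that the radial space of $V_{k}$ is spanned by the honest monomials $\{r^{|k|+2l}:0\le l\le N_{k}\}$ for some $N_{k}\in\widehat{\mathbb{N}}$ — i.e. excluding non-integer or wrong-parity exponents, logarithmic terms, and gaps. Here one must bring in the translations $T_{1,b}$, the only generators coupling distinct weight spaces $V_{k}$: applying $T_{1,b}$ to a putative element $(z/|z|)^{k}r^{\mu}(\log r)^{j}$ of $V_{k}$ and projecting back with the $P_{m}$ produces, on the disc $\{|z|<|b|\}$, genuine power series in $z,\overline{z}$ whose radial profiles lie in the radial spaces of the $V_{m}$; matching the arithmetic progressions of exponents so obtained with the spectral data of those spaces forces $j=0$, $\mu\in\mathbb{N}$ with the parity of $|k|$, and the admissible set of exponents to be an initial segment — the down-set condition resurfacing analytically. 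This yields $V_{k}=\overline{\mathbf{span}_{\mathbb{C}}\{z^{n}\overline{z}^{m}\in V:n-m=k\}}$, hence $V=\overline{V_{\mathrm{fin}}}$ and the theorem. The heart of the matter is precisely this rigidity of the spectrum under the full affine group; mere translation invariance in $\mathbb{R}^{2}$ would not be enough, since spectral synthesis can fail there, and it is the reduction to a single radial variable afforded by rotation invariance that makes the argument go through.
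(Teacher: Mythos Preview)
The paper does not prove this theorem. It is quoted from \cite{sternfeld_weit} as one of the two external tools on which the rest of the article is built (see the sentence ``The main tool for our proofs are, henceforth, the following two well known results''), and no argument for it appears anywhere in the text. There is therefore nothing in the present paper to compare your proposal against.

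Two remarks are nonetheless in order. First, the paper explicitly stresses that the original proofs of Theorems~\ref{SW} and~\ref{P} are ``of elementary nature (they do not use technical results from complex analysis nor measure theory)''. Your route, by contrast, passes through Schwartz's spectral synthesis for mean-periodic functions on the line in order to analyse the radial part of each rotation-weight component $V_{k}$; this is substantially heavier machinery than what the cited source reportedly requires. Second, the step you yourself flag as ``the main obstacle'' --- the rigidity argument that rules out non-integer or wrong-parity exponents, logarithmic factors, and gaps by translating an element of $V_{k}$ and re-projecting onto the various $V_{m}$ --- is only outlined: the assertion that matching the resulting arithmetic progressions of exponents against the spectra of the $V_{m}$ forces the desired conclusion is plausible but not carried out, and it is exactly here that the affine group must do real work (translation invariance alone being insufficient, as you note). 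So as it stands your sketch is an interesting programme rather than a proof; if you want an argument in the elementary spirit the paper advertises, you should consult \cite{sternfeld_weit} directly.
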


\begin{theorem}[Pinkus \cite{pinkus_TDI}]  \label{P} Assume that $V$ is a closed subspace of $C(\mathbb{R}^d,\mathbb{R})$ and $S_{a,b}(V)\subseteq V$ for all $a,b\in\mathbb{R}^d$, where 
$S_{a,b}:C(\mathbb{R}^d,\mathbb{R})\to C(\mathbb{R}^d,\mathbb{R})$ is defined by $S_{a,b}(f)(x)=f(a\cdot x+b)$. Then  
$$V=\overline{\mathbf{span}_{\mathbb{R}}\left(\bigcup_{k=1}^rB_{[n_k]}\right)}$$ 
for a certain finite set of points $\{n_k=(n_{k,1},\cdots,n_{k,d})\}_{k=1}^r\subseteq(\widehat{\mathbb{N}})^d$, where $[n]=\{\alpha\in \mathbb{N}^d: \alpha\leq n\}$  and, given 
$J\subseteq \mathbb{N}^d$, $B_J=\overline{\mathbf{span}_{\mathbb{R}}\{x^n: n\in J\}}$. 

\end{theorem}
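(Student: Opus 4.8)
The plan is to reduce the classification to the finite-dimensional invariant subspaces, to identify those as spans of monomials, and to finish with a combinatorial fact about order ideals in $\mathbb{N}^d$. Throughout, $C(\mathbb{R}^d,\mathbb{R})$ carries the topology of uniform convergence on compact sets (so it is a Fr\'{e}chet space), $x^n$ denotes the monomial $x_1^{n_1}\cdots x_d^{n_d}$, and the elementary identity $S_{a,0}(x^n)=a^n x^n$ --- showing that the monomials are exactly the joint eigenvectors of the dilations $D_a:=S_{a,0}$ --- is used throughout. First I would show a nonzero $S$-invariant $V$ (meaning $S_{a,b}(V)\subseteq V$ for all $a,b\in\mathbb{R}^d$) contains the constants: choosing $f\in V$ with $f(x_0)\neq 0$ and translating $f$ so that the nonvanishing point is the origin, one has $S_{a,0}f\to f(0)\,\mathbf{1}$ uniformly on compact sets as $a\to 0$, whence $\mathbf{1}\in V$ by closedness. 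Running the same zoom-in inside the (finite-dimensional, hence complete) invariant subspace generated by a monomial, together with difference quotients $h^{-1}(S_{1,he_i}-I)$ which converge there to $\partial_i$, shows that $x^n\in V$ forces $x^m\in V$ for all $m\le n$; thus $E(V):=\{n\in\mathbb{N}^d:x^n\in V\}$ is an order ideal (downward-closed set) of $\mathbb{N}^d$. Finally, a finite-dimensional $S$-invariant $W$ is stable under the generators $\partial_i$ of the translation one-parameter groups, so triangularizing these commuting operators puts $W$ inside a span of exponential monomials $x^\alpha e^{\langle\lambda,x\rangle}$; a nonzero coordinate $\lambda_i$ would create infinitely many exponents under dilation in that variable, contradicting $\dim W<\infty$, so $W$ consists of polynomials, and invariance under the monomial-diagonalized family $\{D_a\}$, whose eigenvalue functions $a\mapsto a^\alpha$ separate distinct monomials, forces $W=\mathbf{span}_{\mathbb{R}}\{x^\alpha:\alpha\in E(W)\}$ with $E(W)$ a finite order ideal.

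It then remains to prove that a closed $S$-invariant $V$ which is not all of $C(\mathbb{R}^d,\mathbb{R})$ (that exceptional case being $B_{[(+\infty,\dots,+\infty)]}$, by Stone--Weierstrass) is the closed span of the finite-dimensional invariant subspaces it contains, i.e. $V=\overline{\mathbf{span}_{\mathbb{R}}\{x^n:n\in E(V)\}}$; this is the crux. For $d=1$ I would argue via mean-periodic functions: pick $0\neq\mu\in V^{\perp}$, a compactly supported measure; since $\mathbf{1}\in V$ we have $\widehat{\mu}(0)=0$, and $\int f(ax+b)\,d\mu(x)\equiv 0$ shows each $f\in V$ is killed by convolution with every dilate $\mu_a$ of $\mu$; as $\widehat{\mu_a}(\xi)=\widehat{\mu}(a\xi)$ with $\widehat{\mu}$ a nonzero entire function, $\xi=0$ is the only common zero of all the $\widehat{\mu_a}$, so the classical description of the solutions of a single convolution equation leaves $f$ only the zero frequency and lands it in $\mathcal{P}_{m-1}$, $m=\mathrm{ord}_{0}(\widehat{\mu})$. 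Hence $V\subseteq\mathbb{R}[x]$; each $V\cap\mathcal{P}_n$ is closed, their union is $V$, and Baire's theorem makes $V$ finite-dimensional, so the first paragraph applies. For $d>1$ the dilations cannot be dropped in favour of spectral synthesis --- which fails already for pure $\mathbb{R}^d$-translations --- and this is the main obstacle; the route I would take is induction on $d$, peeling off the last coordinate. For a compactly supported measure $\psi$ on $\mathbb{R}^{d-1}$ the map $\Phi_\psi(f)(x_d)=\int f(x',x_d)\,d\psi(x')$ sends $V$ into a closed $S$-invariant subspace of $C(\mathbb{R},\mathbb{R})$, so by the one-variable case either some $\Phi_\psi$ produces an exponential and a density argument forces $V=C(\mathbb{R}^d,\mathbb{R})$, or every $f\in V$ is polynomial in $x_d$ for each fixed $x'$. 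In the latter case one must boost this to a uniform degree bound on $V$ --- from an $f$ with unbounded slice-degrees, a gliding-hump construction using translations in $x'$ produces an element of $V$ with a non-polynomial $x_d$-slice, contradicting the previous sentence --- and then, writing $f(x',x_d)=\sum_{j\le N}c_j(f)(x')\,x_d^{\,j}$, the continuous $S$-equivariant coefficient maps send $V$ onto closed $S$-invariant subspaces of $C(\mathbb{R}^{d-1},\mathbb{R})$ governed by the inductive hypothesis; separating the powers of $x_d$ by further translations and dilations then rebuilds $E(V)$ from the lower-dimensional data.

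Finally, from $V=\overline{\mathbf{span}_{\mathbb{R}}\{x^n:n\in E(V)\}}$ with $E(V)$ an order ideal of $\mathbb{N}^d$ one reads off the stated form: by Dickson's lemma the complementary up-set $\mathbb{N}^d\setminus E(V)$ has finitely many minimal elements, and a routine dissection of the resulting staircase region writes $E(V)=\bigcup_{k=1}^{r}[n_k]$ for finitely many $n_k\in(\widehat{\mathbb{N}})^d$; since $B_{[n_k]}=\overline{\mathbf{span}_{\mathbb{R}}\{x^n:n\in[n_k]\}}$ this is exactly $V=\overline{\mathbf{span}_{\mathbb{R}}\left(\bigcup_{k=1}^{r}B_{[n_k]}\right)}$, with $V=C(\mathbb{R}^d,\mathbb{R})$ occurring as $r=1,\ n_1=(+\infty,\dots,+\infty)$ and $V=\{0\}$ as the degenerate $r=0$ case. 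The single step I expect to be genuinely hard is the $d>1$ synthesis: because naive spectral synthesis is unavailable, one has to interleave the translation and dilation actions, and in particular upgrade the pointwise `polynomial in $x_d$' statement to a uniform bound, which is precisely what lets the finite-dimensional analysis be brought to bear one coordinate at a time.
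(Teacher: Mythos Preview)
The paper does not contain a proof of this theorem. Theorem~\ref{P} is quoted from Pinkus \cite{pinkus_TDI} as one of the two external tools on which the entire paper rests; the authors state it, cite it, and then remark that ``the proofs of Theorems \ref{SW} and \ref{P} are of elementary nature (they do not use technical results from complex analysis nor measure theory).'' There is therefore nothing in the paper to compare your proposal against.

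That said, a brief remark on your approach versus what the paper tells us about Pinkus's: your $d=1$ argument routes through the theory of mean-periodic functions (annihilating measures, entire Fourier transforms, the Schwartz-type description of the kernel of a convolution operator), and your $d>1$ step explicitly invokes the failure of spectral synthesis as the obstacle to overcome. This is a perfectly reasonable analytic strategy, but it is not ``elementary'' in the sense the authors advertise for Pinkus's original proof; in particular, the appeal to the structure of solutions of $\mu\ast f=0$ via the zero set of $\widehat{\mu}$ is exactly the kind of complex-analytic input the paper says is absent from \cite{pinkus_TDI}. So while your outline may well be completable, it is almost certainly a different route from Pinkus's, and the honest gaps you flag yourself---upgrading the slice-wise polynomial conclusion to a uniform degree bound via a gliding-hump, and then reassembling $E(V)$ from the lower-dimensional coefficient spaces---are real and would need to be written out carefully. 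The final combinatorial step (Dickson's lemma to write the order ideal $E(V)$ as a finite union of boxes $[n_k]$ with $n_k\in(\widehat{\mathbb{N}})^d$) is correct and standard.
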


\begin{remark} In Theorem \ref{P}, we have used the following standard notation, which will be used also in section 3 of this paper: If $\alpha=(\alpha_1,\cdots,\alpha_d)\in\mathbb{N}^d$, $a=(a_1,\cdots,a_d), b=(b_1,\cdots,b_d),x=(x_1,\cdots,x_d)\in \mathbb{R}^d$, then $a\cdot x=(a_1x_1,\cdots,a_dx_d)$, $x+b=(x_1+b_1,\cdots,x_d+b_d)$, $x^{\alpha}=x_1^{\alpha_1}x_2^{\alpha_2}\cdots x_d^{\alpha_d}$ and $|\alpha|=\sum_{k=1}^d \alpha_k$. Furthermore, if $n=(n_1,\cdots,n_d),m=(m_1,\cdots,m_d)\in\widehat{\mathbb{N}}^d$, then $n\leq m$ means that $n_1\leq m_1,n_2\leq m_2,\cdots,n_d\leq m_d$.  Finally, $\Pi_{m}^{d}$ denotes the set of real polynomials in $d$ real variables with total degree $\leq m$ (when $d=1$ we write $\Pi_m$ instead of $\Pi_{m}^1$). \end{remark}

In section 2 we study the mean value equation of Kakutani-Nagumo-Walsh (see \cite{haruki1}, \cite{haruki2}, \cite{haruki3}, \cite{kakutani_nagumo}, \cite{walsh}) and a variation of this equation introduced by Haruki in \cite{haruki3}. In section 3 we give another proof of Frechet's original theorem \cite{frechet}, for the space of continuous real functions of several real variables. It is important to note that the proofs of Theorems \ref{SW} and \ref{P} are of elementary nature (they do not use technical results from complex analysis nor measure theory), so that all proofs in this paper are also elementary.

\section{Continuous solutions of Kakutani-Nagumo-Walsh and Haruky functional equations}

Let $\theta$ be any primitive $n$-th root of $1$ and $\eta$ be any primitive $2N$-th root of $1$. In \cite{haruki3} S. Haruki studied the solutions of the following functional equations:
\begin{equation}\label{KNW}
\frac{1}{N}\sum_{k=0}^{N-1}f(x+\theta^ky)=f(x) \text{ for all } x,y\in\mathbb{C} \text{ (Kakutani-Nagumo-Walsh equation)}
\end{equation} 
\begin{equation}\label{N}
\sum_{k=0}^{N-1}(|f(x+\theta^ky)|^2-|f(x)|^2)=\sum_{k=0}^{N-1}|f(x+\theta^ky)-f(x)|^2 \text{ for all } x,y\in\mathbb{C} \text{ \hfill{(Nagumo equation)}}
\end{equation} 
\begin{equation}\label{H}
\sum_{k=0}^{N-1}f(x+\eta^{2k}y)=\sum_{k=0}^{N-1}f(x+\eta^{2k+1}y) \text{ for all } x,y\in\mathbb{C} \text{ (Haruki equation)}
\end{equation} 
Concretely, he proved that all these equations, when considered over the space of entire functions (i.e., when imposing $f\in H(\mathbb{C})$), are equivalent in the sense that they share the same space of solutions: the algebraic polynomials $p(z)$ of degree $\leq N-1$. To prove this result he used the open mapping theorem for analytic functions. In this section we study the continuous solutions of the equations $\eqref{KNW}$ and $\eqref{H}$. The main tool we need for the proofs is the theorem of Sternfeld and Weit (Theorem \ref{SW} above). 

\begin{theorem} \label{TH} Let $S_{H}$ denote the space of functions $f\in C(\mathbb{C},\mathbb{C})$ which solve $\eqref{H}$. Then 
\[
S_{H}=\{f(z)=\sum_{i=0}^{N-1}\sum_{j=0}^{N-1}a_{i,j}z^i\overline{z}^j :a_{i,j}\in\mathbb{C} \text{ fo all } 0\leq i,j\leq N-1\}.
\]
\end{theorem}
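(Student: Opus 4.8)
The plan is to show that the solution space $S_H$ is a closed subspace of $C(\mathbb{C},\mathbb{C})$ which is invariant under all the maps $T_{a,b}$, and then to read off its structure from Theorem \ref{SW}. First I would verify that $S_H$ is a closed vector subspace: linearity of equation \eqref{H} in $f$ makes $S_H$ a subspace, and since the defining relation is an equality of continuous (indeed, continuous-in-$f$) expressions that must hold for all $x,y$, uniform-on-compacta limits of solutions are again solutions, so $S_H$ is closed. Next I would check $T_{a,b}$-invariance: if $f$ solves \eqref{H}, apply the substitution $z\mapsto az+b$; because \eqref{H} holds for \emph{all} $x,y$, replacing $x$ by $ax+b$ and $y$ by $ay$ shows that $g(z)=f(az+b)$ again satisfies \eqref{H} (the constant $b$ is absorbed into the base point $x$ and the factor $a$ into $y$, and the roots $\eta^{2k}$, $\eta^{2k+1}$ are untouched). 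Hence $T_{a,b}(S_H)\subseteq S_H$ for all $a,b\in\mathbb{C}$, so Theorem \ref{SW} applies: $S_H=\overline{\mathbf{span}_{\mathbb{C}}\left(\bigcup_{k=1}^r A_{[(n_k,m_k)]}\right)}$ for suitable $(n_k,m_k)\in\widehat{\mathbb{N}}^2$.

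The second half is to identify exactly which monomials $z^i\overline{z}^j$ lie in $S_H$, since by the structure theorem $S_H$ is the closure of the span of the monomials it contains. Plugging $f(z)=z^i\overline{z}^j$ into \eqref{H} with $z=x+\eta^{2k}y$ versus $z=x+\eta^{2k+1}y$ and expanding by the binomial theorem, the equation reduces (after matching powers of $x,\overline{x},y,\overline{y}$) to a family of identities of the form $\sum_{k=0}^{N-1}\eta^{2k(p-q)}=\sum_{k=0}^{N-1}\eta^{(2k+1)(p-q)}$ for the relevant exponents $p\le i$, $q\le j$; using that $\eta$ is a primitive $2N$-th root of unity, $\sum_{k=0}^{N-1}\eta^{2k\ell}$ equals $N$ when $N\mid \ell$ and $0$ otherwise, while the odd sum carries an extra factor $\eta^{\ell}=(-1)^{\ell/N}$ when $N\mid\ell$. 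So the two sides agree automatically unless some difference of exponents is a nonzero multiple of $N$ with the corresponding sign being $-1$, i.e. an odd multiple of $N$; the smallest obstruction occurs at exponent $N$ itself. Working this out shows $z^i\overline{z}^j\in S_H$ precisely when $0\le i,j\le N-1$ (when $i$ or $j$ reaches $N$ the cross term at $y^N$ or $\overline{y}^N$ fails to cancel), and moreover that once $i,j\le N-1$ all intermediate terms vanish identically term-by-term, so no larger "box" $[(n_k,m_k)]$ than $[(N-1,N-1)]$ can appear. Conversely every monomial with $0\le i,j\le N-1$ is checked to be a solution directly.

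Combining the two halves: Theorem \ref{SW} forces $S_H$ to be the closed span of a union of boxes of monomials, the monomial computation shows the only monomials present are those with $0\le i,j\le N-1$, and this finite-dimensional span $\{\sum_{i,j=0}^{N-1}a_{i,j}z^i\overline{z}^j\}$ is already closed, giving the stated equality. I expect the main obstacle to be the bookkeeping in the monomial computation: one must carefully track the exponents of $x,\overline{x},y,\overline{y}$ produced by expanding $(x+\eta^{2k}y)^i\overline{(x+\eta^{2k}y)}^j$, and confirm that the \emph{only} surviving obstruction is at the top degree — in particular that mixed terms never conspire to create a failure when $i,j\le N-1$, nor an unexpected cancellation allowing $i$ or $j$ to reach $N$. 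A clean way to organize this is to fix $x$ and regard both sides of \eqref{H} as polynomials in $y,\overline{y}$, equate coefficients, and reduce each coefficient identity to the elementary root-of-unity sum above.
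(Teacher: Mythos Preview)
Your proposal is correct and follows essentially the same route as the paper: show $S_H$ is closed and $T_{a,b}$-invariant, invoke the Sternfeld--Weit classification, and then determine which monomials $z^i\overline{z}^j$ belong to $S_H$. The only difference is tactical: rather than analyzing every monomial, the paper exploits the box structure by checking just three --- that $z^{N-1}\overline{z}^{N-1}\in S_H$ (forcing $A_{[(N-1,N-1)]}\subseteq S_H$) and that $z^N,\overline{z}^N\notin S_H$ (ruling out any larger box, since any $(n,m)$ with $\max\{n,m\}\ge N$ dominates $(N,0)$ or $(0,N)$) --- which is slightly more economical than your full coefficient comparison but leads to the same conclusion.
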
 
\begin{proof} We assume, without loss of generality, that $\eta^2=\theta$. Then equation $\eqref{H}$ can be written as 
\begin{equation} \label{oper}
H_y(f)(x)=H_{\eta y}(f)(x) \text{ for all } x,y\in\mathbb{C},
\end{equation}
where $H_y(f)(x)= \frac{1}{N}\sum_{k=0}^{N-1}f(x+\theta^ky)$. We want to identify the set  $S_H=\{f\in C(\mathbb{C},\mathbb{C}): H_y(f)(x)=H_{\eta y}(f)(x) \text{ for all } x,y\in\mathbb{C}\}$. The important observation is that $S_H$ is a closed subspace of $C(\mathbb{C},\mathbb{C})$ which is invariant under the operators $T_{a,b}$, so that $S_H$ must be one of the spaces specified by Theorem \ref{SW}.
Let us, for the sake of completeness, check that $S_H$ is a closed  $T_{a,b}$-invariant subspace of $ C(\mathbb{C},\mathbb{C})$. With this objective in mind, we define the operators $L_y=H_y-H_{\eta y}$ ($y\in \mathbb{C}$). Then, by definition, $S_H=\bigcap_{y\in\mathbb{C}}\ker(L_y)$, so that $S_H$ is a closed linear subspace of $C(\mathbb{C},\mathbb{C})$, since $L_y$ is a continuous linear operator for every $y$. On the other hand, a simple computation shows that
\begin{eqnarray*}
H_y(T_{a,b}(f))(x) &=&   \frac{1}{N}\sum_{k=0}^{N-1}T_{a,b}(f)(x+\theta^ky)= \frac{1}{N}\sum_{k=0}^{N-1}f(a(x+\theta^ky)+b)\\
&=&   \frac{1}{N}\sum_{k=0}^{N-1}f(ax+b+\theta^k(ay)) \\
&=& H_{ay}(f)(ax+b) = T_{a,b}(H_{ay}(f))(x) \text{ for all } x,y,a,b\in\mathbb{C}.
\end{eqnarray*} 
Hence, if $f\in S_H$, then
\[
H_y(T_{a,b}(f))= T_{a,b}(H_{ay}(f))= T_{a,b}(H_{\eta (ay)}(f))= T_{a,b}(H_{a(\eta y)}(f)) = H_{\eta y}(T_{a,b}(f)), 
\]
which means that $T_{a,b}(f)\in S_H$.  It follows that $S_H=\overline{\mathbf{span}_{\mathbb{C}}\left(\bigcup_{k=1}^rA_{[(n_k,m_k)]}\right)}$ for a certain finite set of points 
$\{(n_k,m_k)\}_{k=1}^r\subseteq(\widehat{\mathbb{N}})^2$. In particular, we know that if $z^n\overline{z}^m\in S_H$ then $z^{\alpha}\overline{z}^{\beta}\in S_H$ for all $(\alpha,\beta)$ such that $0\leq \alpha\leq n$ and $0\leq \beta\leq m$. Let us check what functions of the form $z^{\alpha}\overline{z}^{\beta}$ belong to $S_H$.

\noindent \textbf{Claim 1: $z^N,\overline{z}^N \not\in S_H$. } Let us apply the operator $H_y$ to the function $f_N(z)=z^N$:
\begin{eqnarray*}
NH_y(f_N)(x) &= &\sum_{k=0}^{N-1}(x+\theta^ky)^N\\
 &= &\sum_{k=0}^{N-1}\left(\sum_{t=0}^N\binom{N}{t}x^{N-t}\theta^{kt}y^t\right)\\
  &= &\sum_{t=0}^{N}\left(\sum_{k=0}^{N-1}\binom{N}{t}x^{N-t}\theta^{kt}y^t\right)\\
    &= &\sum_{t=0}^{N}\binom{N}{t}x^{N-t}y^t\left(\sum_{k=0}^{N-1}\theta^{kt}\right)\\
    &=& x^NN+y^NN =N(x^N+y^N),
\end{eqnarray*}
since  
\[
\sum_{k=0}^{N-1}\theta^{kt}= \left \{
\begin{array}{cccccc}
\frac{(\theta^t)^N-1}{\theta^t-1}=0 \text{, whenever } t\not\in N\mathbb{Z}\\
 N\text{, whenever } t\in N\mathbb{Z}
 \end{array}\right. .
\]
Thus, if we substitute $y$ by $\eta y$ in the computations above, we get $H_{\eta y}(f_N)(x)=x^N+(\eta y)^N=x^n-y^N$. This obviously implies that $H_y(f_N)\neq H_{\eta y}(f_N)$, so that $z^N\not\in S_H$. Analogous computations will show that $\overline{z}^N\not\in S_H$.

\noindent \textbf{Claim 2: $z^{N-1}\overline{z}^{N-1} \in S_H$. } In this case, we should apply $H_y$ to $g_N(z)= z^{N-1}\overline{z}^{N-1}=(|z|^2)^{N-1}$. As a first step, we observe that 
\begin{eqnarray*}
|x+\theta^ky|^2 &=& (x+\theta^ky)(\overline{x}+\theta^{-k}\overline{y}) \\
&= & x\overline{x}+x\theta^{-k}\overline{y}+\theta^ky\overline{x}+y\overline{y}\\
&= & |x|^2+|y|^2+\theta^ky\overline{x}+\theta^{-k}x\overline{y},
\end{eqnarray*}
so that
\begin{eqnarray*}
&\ & N H_y(g_N)(x) = \sum_{k=0}^{N-1}(|x+\theta^ky|^2)^{N-1}\\
&= & \ \sum_{k=0}^{N-1}(|x|^2+|y|^2+\theta^ky\overline{x}+\theta^{-k}x\overline{y})^{N-1}\\
&= &\ \sum_{k=0}^{N-1}\sum_{\begin{array}{cccccc}
a+b+c+d=N-1\\
a,b,c,d\geq 0
 \end{array}} \binom{a}{a}  \binom{a+b}{b}  \binom{a+b+c}{c}  \binom{N-1}{d} |x|^{2a}|y|^{2b}(\theta^ky\overline{x})^c(\theta^{-k}x\overline{y})^{d}\\
 &= &\ \sum_{\begin{array}{cccccc}
a+b+c+d=N-1\\
a,b,c,d\geq 0
 \end{array}}\left( \binom{N-1}{a,b,c,d}  |x|^{2a}|y|^{2b}(y\overline{x})^c(x\overline{y})^{d} 
\left( \sum_{k=0}^{N-1}(\theta^{c-d})^k\right)
 \right) 
\end{eqnarray*}
Now, $0\leq c,d\leq N-1$ implies that $c-d\in N\mathbb{Z}$ if and only if $c=d$. Hence
\[
 \sum_{k=0}^{N-1}(\theta^{c-d})^k=  \left \{
\begin{array}{cccccc}
0 \text{, if } c \neq d \\
 N\text{, if } c=d 
 \end{array}\right. .
\]
Furthermore, $(y\overline{x})^c(x\overline{y})^{c} = |x|^{2c}|y|^{2c}$, so that 
\begin{eqnarray*}
&\ & N H_y(g_N)(x) \\
&=&   \ 2N\sum_{\begin{array}{cccccc}
a+b+2c=N-1\\
a,b,c\geq 0
 \end{array}} \binom{N-1}{a,b,c,c}  |x|^{2a}|y|^{2b} |x|^{2c}|y|^{2c} 
\end{eqnarray*}
If we substitute $y$ by $\eta y$ in the computations above, the result will not be modified, since $|\eta y|^{2c}=|y|^{2c}$. Hence, $H_{y}(g_N)=H_{\eta y}(g_N)$ and $g_N(z)=z^{N-1}\overline{z}^{N-1}$ belongs to $S_H$.

Taking into account the structure of $S_H$, it follows that 
\begin{itemize}
\item[$(a)$] All functions of the form $f(z)=\sum_{i=0}^{N-1}\sum_{j=0}^{N-1}a_{i,j}z^i\overline{z}^j $ belong to $S_H$, since $z^{N-1}\overline{z}^{N-1}\in S_H$ implies that $A_{[(N-1,N-1)]}\subseteq S_H$.
\item[$(b)$] These are the only elements of $S_H$ since, if $z^n\overline{z}^m\in S_H$ and $\max\{n,m\}>N-1$, then $(0,N)\leq (n,m)$ or $(N,0)\leq (n,m)$, so that $z^N\in S_H$ or $\overline{z}^N\in S_H$, and both things are false, as we have already proved with Claim 1.
\end{itemize}
This ends the proof.
\end{proof}

\begin{corollary} The complex polynomials $p(z)=a_0+a_1z+\cdots+a_{N-1}z^{N-1}$ of degree $\leq N-1$ are the only entire functions which are solutions of the Haruki functional equation $\eqref{H}$.
\end{corollary}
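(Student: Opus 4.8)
The plan is to read off the Corollary from Theorem \ref{TH} with only a short extra argument. Any entire function $f\in H(\mathbb{C})$ that solves \eqref{H} is in particular a continuous solution, so $f\in S_H$, and Theorem \ref{TH} gives a representation $f(z)=\sum_{i=0}^{N-1}\sum_{j=0}^{N-1}a_{i,j}z^i\overline{z}^j$ with $a_{i,j}\in\mathbb{C}$. The content of the Corollary is then exactly that holomorphy forces $a_{i,j}=0$ whenever $j\geq 1$, so that $f$ collapses to an ordinary complex polynomial of degree $\leq N-1$. The reverse inclusion is free: every such polynomial is entire, and it lies in $S_H$ by Theorem \ref{TH} (take $a_{i,j}=0$ for $j\geq 1$, which is allowed since $A_{[(N-1,N-1)]}\subseteq S_H$ contains all $z^i$ with $0\leq i\leq N-1$).

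To prove that a holomorphic $f$ of the above form has no genuine $\overline{z}$-dependence, I would apply the Wirtinger operator $\partial/\partial\overline{z}=\tfrac12(\partial_x+i\partial_y)$. Since $f$ is entire, $\partial f/\partial\overline{z}\equiv 0$, while term-by-term differentiation of the finite sum yields $\partial f/\partial\overline{z}=\sum_{i=0}^{N-1}\sum_{j=1}^{N-1} j\,a_{i,j}\,z^i\overline{z}^{\,j-1}$. It therefore suffices to know that the monomials $\{z^i\overline{z}^{\,m}\}_{i,m\geq 0}$ are linearly independent as continuous functions on $\mathbb{C}$. This is elementary: writing $z=re^{i\phi}$ one has $z^i\overline{z}^{\,m}=r^{\,i+m}e^{i(i-m)\phi}$; grouping the terms of a vanishing linear combination according to the value of $i-m$ and using orthogonality of the characters $e^{ik\phi}$ on $[0,2\pi]$ gives, for each fixed difference, a polynomial identity in $r$ whose exponents $i+m$ are pairwise distinct, so all coefficients vanish. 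Hence $j\,a_{i,j}=0$ for all $i$ and all $j$, i.e. $a_{i,j}=0$ for $j\geq 1$, and $f(z)=\sum_{i=0}^{N-1}a_{i,0}z^i$.

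I do not expect a real obstacle here; the only place needing a little care is the linear independence of the $z^i\overline{z}^{\,m}$, and even that can be replaced by the familiar remark that $f$, being a polynomial in $\operatorname{Re}z$ and $\operatorname{Im}z$, is holomorphic if and only if it satisfies the Cauchy--Riemann equations, whence it must be a polynomial in $z$ alone. Either route is short. It is worth noting explicitly that this recovers Haruki's theorem for \eqref{H} from \cite{haruki3} without appealing to the open mapping theorem, in keeping with the elementary nature of the arguments of the present paper.
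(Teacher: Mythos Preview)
Your proof is correct and is exactly the natural argument the paper has in mind: the corollary is stated in the paper without proof, as an immediate consequence of Theorem~\ref{TH}, and you have supplied precisely the missing step---intersecting $S_H$ with $H(\mathbb{C})$ kills the $\overline{z}$-dependence via the Cauchy--Riemann equations. The linear-independence argument for the monomials $z^i\overline{z}^m$ is a nice touch but, as you note yourself, overkill; the one-line remark that a polynomial in $x,y$ which is holomorphic must be a polynomial in $z$ alone suffices.
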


\begin{corollary} 
Let $f\in C(\mathbb{C},\mathbb{R})$. Then $f$ is a solution of $\eqref{H}$ if and only if
\[
f(x+iy)=p(x,y)= \sum_{n=0}^{N-1}\sum_{m=0}^{N-1}a_{n,m}x^ny^m\in\mathbb{R}[x,y]
\]
is an ordinary real polynomial of two real variables with  $\deg_{\max}p(x,y)\leq N-1$.  (Here $x,y$ denote real variables).
\end{corollary}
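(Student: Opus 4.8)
\emph{Proof proposal.} The plan is to read the real-valued solutions straight off Theorem \ref{TH}. First, note that a continuous $f\colon\mathbb{C}\to\mathbb{R}$ solving \eqref{H} is in particular a continuous $\mathbb{C}$-valued solution, so $f\in S_H$ and Theorem \ref{TH} gives $f(z)=\sum_{i,j=0}^{N-1}a_{i,j}z^i\overline{z}^j$ for suitable $a_{i,j}\in\mathbb{C}$; conversely, any real-valued function of this form lies in $S_H$ and hence solves \eqref{H}. Thus the corollary reduces to describing the real-valued members of the finite-dimensional space $\mathbf{span}_{\mathbb{C}}\{z^i\overline{z}^j:0\le i,j\le N-1\}$ after the substitution $z=x+iy$ with $x,y$ real.

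Second, I would pin down the reality constraint on the coefficients. Since the monomials $z^i\overline{z}^j$ are $\mathbb{C}$-linearly independent as functions on $\mathbb{C}$, conjugating $f=\sum a_{i,j}z^i\overline{z}^j$ and comparing coefficients shows that $\overline{f}=f$ if and only if $a_{i,j}=\overline{a_{j,i}}$ for all $i,j$; so the $a_{i,i}$ are real and the off-diagonal coefficients occur in conjugate pairs, which is exactly $N^2$ real parameters. Grouping conjugate pairs, $f=\sum_{i}a_{i,i}(z\overline{z})^i+\sum_{i<j}2\,\mathrm{Re}\bigl(a_{i,j}z^i\overline{z}^j\bigr)$, which makes the real-valuedness manifest and, on expanding $z=x+iy$, exhibits $f(x+iy)$ as an element of $\mathbb{R}[x,y]$.

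Third — and this is the step I expect to carry the real weight — I would match this $(z,\overline{z})$-description against the $(x,y)$-description in the statement. Each monomial $z^i\overline{z}^j=(x+iy)^i(x-iy)^j$ is homogeneous of degree $i+j\le 2(N-1)$ in $(x,y)$, and the passage $(z,\overline{z})\leftrightarrow(x,y)$ is an invertible linear change of monomial basis that respects total degree; so one would compare, degree by degree, the span of $\{z^i\overline{z}^j:i+j=d,\ 0\le i,j\le N-1\}$ with the monomials $x^ny^m$ of that degree allowed in $p$, and then check that the conjugation-invariant part is precisely $\sum_{n,m=0}^{N-1}a_{n,m}x^ny^m$ with $a_{n,m}\in\mathbb{R}$. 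The main obstacle is exactly this bookkeeping: the two monomial systems do not correspond term by term (already $z\overline{z}=x^2+y^2$), so the identification cannot be done by inspection and must be pushed through via the grading together with a dimension count; making the bound $\deg_{\max}p\le N-1$ fall out cleanly is where I would concentrate the effort.
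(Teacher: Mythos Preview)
Your first two steps are correct, and the paper offers no proof of this corollary, so there is nothing to compare against. The difficulty you flagged in the third step is real, and in fact that step cannot be completed: the corollary as stated is false. Take $N=2$. Theorem~\ref{TH} gives $S_H=\mathbf{span}_{\mathbb{C}}\{1,z,\overline z,z\overline z\}$, and your Hermitian condition $a_{j,i}=\overline{a_{i,j}}$ shows that its real-valued members are exactly
\[
f(x+iy)=a+bx+cy+d(x^2+y^2),\qquad a,b,c,d\in\mathbb{R}.
\]
Thus $x^2+y^2=z\overline z$ is a real-valued solution of \eqref{H} but is \emph{not} of the form $\sum_{n,m=0}^{1}a_{n,m}x^ny^m$ (its $x$-degree is $2>N-1$). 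Conversely, $g(x+iy)=xy=\tfrac{1}{4i}(z^2-\overline z^{\,2})$ has the form asserted in the corollary but lies outside $S_H$, since $z^2,\overline z^{\,2}\notin S_H$; a direct check of \eqref{H} with $\eta=i$, $\theta=-1$ confirms $g\notin S_H$.

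So the two $N^2$-dimensional real spaces you were trying to identify---the conjugation-invariant part of $\mathbf{span}_{\mathbb{C}}\{z^i\overline z^{\,j}:0\le i,j\le N-1\}$ on the one hand, and $\bigoplus_{0\le n,m\le N-1}\mathbb{R}\,x^ny^m$ on the other---are genuinely different subspaces of $\mathbb{R}[x,y]$, and no bookkeeping or dimension count will make them coincide. The correct description of the real-valued continuous solutions of \eqref{H} is the one you already obtained in your second step; in $(x,y)$-coordinates it is a specific $N^2$-dimensional subspace of the real polynomials of \emph{total} degree $\le 2(N-1)$, not the space written in the corollary.
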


Let us now consider the continuous solutions of $\eqref{KNW}$. 

\begin{theorem}
Let $S_{KNW}$ denote the space of functions $f\in C(\mathbb{C},\mathbb{C})$ which solve $\eqref{KNW}$. Then 
\[
S_{KNW}=\{a_0+a_1z+\cdots +a_{N-1}z^{N-1}+b_1\overline{z}+\cdots+b_{N-1}\overline{z}^{N-1}:(a_0,\cdots,a_{N-1},b_1,\cdots,b_{N-1})\in\mathbb{C}^{2N-1}\}.
\]
\end{theorem}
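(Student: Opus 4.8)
The plan is to mimic the proof of Theorem \ref{TH} almost verbatim. First I would record that $S_{KNW}$ is a closed $T_{a,b}$-invariant subspace of $C(\mathbb{C},\mathbb{C})$: writing $H_y(f)(x)=\frac1N\sum_{k=0}^{N-1}f(x+\theta^k y)$ as in the proof of Theorem \ref{TH}, the equation $\eqref{KNW}$ reads $H_y(f)=f$ for all $y$, so $S_{KNW}=\bigcap_{y\in\mathbb{C}}\ker(H_y-\mathrm{Id})$, an intersection of kernels of continuous linear operators, hence a closed linear subspace. Moreover the identity $H_y(T_{a,b}(f))=T_{a,b}(H_{ay}(f))$ already established in the proof of Theorem \ref{TH} gives, for $f\in S_{KNW}$, that $H_y(T_{a,b}(f))=T_{a,b}(H_{ay}(f))=T_{a,b}(f)$, so $T_{a,b}(f)\in S_{KNW}$. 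By Theorem \ref{SW}, $S_{KNW}=\overline{\mathbf{span}_{\mathbb{C}}\big(\bigcup_{k=1}^r A_{[(n_k,m_k)]}\big)}$ for some finite set $\{(n_k,m_k)\}_{k=1}^r\subseteq\widehat{\mathbb{N}}^2$, and the entire problem reduces to deciding which monomials $z^{\alpha}\overline{z}^{\beta}$ belong to $S_{KNW}$.

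Next I would perform three monomial computations, all in the spirit of Claims 1 and 2 above and all powered by the fact that $\sum_{k=0}^{N-1}\theta^{kt}$ equals $N$ if $t\in N\mathbb{Z}$ and $0$ otherwise. (i) For $0\leq n\leq N-1$, binomial expansion gives $H_y(z^n)(x)=\sum_{t=0}^n\binom{n}{t}x^{n-t}y^t\,\frac1N\sum_{k=0}^{N-1}\theta^{kt}=x^n$, because $0\leq t\leq n\leq N-1$ makes $t\in N\mathbb{Z}$ only for $t=0$; hence $z^n\in S_{KNW}$, and by the symmetric computation $\overline{z}^n\in S_{KNW}$. (ii) The computation in Claim 1 of the proof of Theorem \ref{TH} already shows $H_y(z^N)(x)=x^N+y^N\neq x^N$, so $z^N\notin S_{KNW}$, and likewise $\overline{z}^N\notin S_{KNW}$. (iii) From $|x+\theta^k y|^2=|x|^2+|y|^2+\theta^k y\overline{x}+\theta^{-k}x\overline{y}$ one gets $H_y(z\overline{z})(x)=|x|^2+|y|^2+\frac1N\big(y\overline{x}\sum_{k}\theta^k+x\overline{y}\sum_{k}\theta^{-k}\big)=|x|^2+|y|^2$ when $N\geq 2$ (the two sums vanish since $1\notin N\mathbb{Z}$), which differs from $|x|^2$ for $y\neq 0$, so $z\overline{z}\notin S_{KNW}$.

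Finally I would combine these facts with the box structure supplied by Theorem \ref{SW}. If a monomial $z^{\alpha}\overline{z}^{\beta}\in S_{KNW}$ had $\alpha\geq 1$ and $\beta\geq 1$, then $(1,1)$ lies in the box $[(\alpha,\beta)]$ and so $z\overline{z}\in S_{KNW}$, contradicting (iii); if $\alpha\geq N$ (resp. $\beta\geq N$) then $z^N\in S_{KNW}$ (resp. $\overline{z}^N\in S_{KNW}$), contradicting (ii). Therefore every box $[(n_k,m_k)]$ occurring in the Sternfeld--Weit representation of $S_{KNW}$ is contained in $[(N-1,0)]$ or in $[(0,N-1)]$, whence $S_{KNW}\subseteq\overline{\mathbf{span}_{\mathbb{C}}\big(A_{[(N-1,0)]}\cup A_{[(0,N-1)]}\big)}=\mathbf{span}_{\mathbb{C}}\{1,z,\dots,z^{N-1},\overline{z},\dots,\overline{z}^{N-1}\}$, the closure being harmless since this span is finite dimensional. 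The reverse inclusion is exactly (i) together with the vector space structure of $S_{KNW}$, and counting the $2N-1$ spanning monomials yields the stated description (and that $\dim S_{KNW}=2N-1$). The only step beyond routine bookkeeping is the cross-term computation (iii): it is what rules out the genuinely two-dimensional boxes $A_J$ that did appear for the Haruki equation, and it is the reason the answer here consists of pure powers of $z$ and $\overline{z}$ rather than a full truncation of $\mathbb{C}[z,\overline{z}]$.
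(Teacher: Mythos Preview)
Your proof is correct and follows essentially the same approach as the paper: both establish that $S_{KNW}$ is a closed $T_{a,b}$-invariant subspace via $S_{KNW}=\bigcap_y\ker(H_y-\mathrm{Id})$ and the identity $H_y\circ T_{a,b}=T_{a,b}\circ H_{ay}$, apply Theorem~\ref{SW}, and then determine the relevant boxes by checking that $z\overline{z}\notin S_{KNW}$, $z^N,\overline{z}^N\notin S_{KNW}$, and $z^{N-1},\overline{z}^{N-1}\in S_{KNW}$. The only cosmetic difference is that you verify $z^n\in S_{KNW}$ directly for all $0\leq n\leq N-1$, whereas the paper checks only $n=N-1$ and lets the box structure $A_{[(N-1,0)]}$ supply the lower powers; both amount to the same computation.
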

\begin{proof}
Obviously, $S_{KNW}$ is closed, since $S_{KNW}=\bigcap_{y\in \mathbb{C}}\ker(H_y-I)$, and it is also affine invariant, since, if $f\in S_{KNW}$, then
\[
H_y(T_{a,b}(f))=T_{a,b}(H_{ay}(f))=T_{a,b}(f),
\] 
so that $T_{a,b}(S_{KNW})\subseteq S_{KNW}$. It follows that $S_{KNW}$ admits a representation of the form $S_H=\overline{\mathbf{span}_{\mathbb{C}}\left(\bigcup_{k=1}^rA_{[(n_k,m_k)]}\right)}$ for a certain finite set of points  $\{(n_k,m_k)\}_{k=1}^r\subseteq(\widehat{\mathbb{N}})^2$. Let us now check, by direct computation, what functions of the form $z^\alpha\overline{z}^\beta$ belong to $S_{KNW}$. 

\noindent \textbf{Claim 1. $z\overline{z}\not\in S_{KNW}$. } Take $g(z)=z\overline{z}$. A direct computation (which we omit here, since it is quite similar to the computations we have already made for the proof of Theorem \ref{TH}), shows that $H_y(g)(x)=N(|x|^2+|y|^2)\neq g(x)$, so that $g\not\in S_{KNW}$. 

\noindent \textbf{Claim 2. $z^N,\overline{z}^N\not\in S_{KNW}$. } We have already shown that, if $f_N(z)=z^N$, then  $H_y(f_N)(x)=x^N+y^N\neq x^N$, so that $z^N\not\in S_{KNW}$. The same computation, taking conjugates, shows that, if $g_N(z)=\overline{z}^N$, then   $H_y(g_N)(x)=\overline{x}^N+\overline{y}^N\neq \overline{x}^N$, so that $\overline{z}^N\not\in S_{KNW}$. 

\noindent \textbf{Claim 3. $z^{N-1}, \overline{z}^{N-1}\in S_{KNW}$. }   Let us apply the operator $H_y$ to the function $f_{N-1}(z)=z^{N-1}$:
\begin{eqnarray*}
NH_y(f_{N-1})(x) &= &\sum_{k=0}^{N-1}(x+\theta^ky)^{N-1}\\
 &= &\sum_{k=0}^{N-1}\left(\sum_{t=0}^{N-1}\binom{N-1}{t}x^{N-1-t}\theta^{kt}y^t\right)\\
  &= &\sum_{t=0}^{N-1}\left(\sum_{k=0}^{N-1}\binom{N-1}{t}x^{N-1-t}\theta^{kt}y^t\right)\\
    &= &\sum_{t=0}^{N-1}\binom{N-1}{t}x^{N-1-t}y^t\left(\sum_{k=0}^{N-1}\theta^{kt}\right)\\
    &=& Nx^{N-1},
\end{eqnarray*}
so that $H_y(f_{N-1})(x)=x^{N-1}$ and $z^{N-1}\in S_{KNW}$. An analogous computation shows that $\overline{z}^{N-1}\in S_{KNW}$.

It follows from Claim 1 that, if $n,m\geq 1$, then $z^n\overline{z}^m\not\in S_{KNW}$ and, from Claims 2 and 3, that $S_{KNW}=A_{[(N-1,0)]}+A_{[(0,N-1)]}$, which is what we wanted to prove. \end{proof}

\begin{corollary} The complex polynomials $p(z)=a_0+a_1z+\cdots+a_{N-1}z^{N-1}$ of degree $\leq N-1$ are the only entire functions which are solutions of the Kakutani-Nagumo-Walsh functional equation $\eqref{KNW}$.
\end{corollary}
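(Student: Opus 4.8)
The plan is to read the corollary directly off the theorem just proved, which already exhibits \emph{every} continuous solution of $\eqref{KNW}$. Since an entire function is in particular an element of $C(\mathbb{C},\mathbb{C})$, any entire solution $f$ of $\eqref{KNW}$ lies in $S_{KNW}$ and hence has the shape
$$f(z)=a_0+a_1z+\cdots+a_{N-1}z^{N-1}+b_1\overline{z}+\cdots+b_{N-1}\overline{z}^{N-1}$$
for suitable complex constants. Thus it remains only to show that holomorphy of $f$ forces $b_1=\cdots=b_{N-1}=0$, and, conversely, that each complex polynomial of degree $\leq N-1$ actually solves $\eqref{KNW}$.

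For the first task I would apply the operator $\partial/\partial\overline{z}$ to the displayed expression: since $f$ is holomorphic, $\partial f/\partial\overline{z}\equiv 0$, so $\sum_{k=1}^{N-1}kb_k\overline{z}^{k-1}\equiv 0$ on $\mathbb{C}$, and the linear independence of $1,\overline{z},\ldots,\overline{z}^{N-2}$ in $C(\mathbb{C},\mathbb{C})$ (a fact used repeatedly in the arguments above) gives $b_k=0$ for every $k$. Equivalently, and still elementarily, one may set $g(z)=f(z)-\sum_{k=0}^{N-1}a_kz^k=\sum_{k=1}^{N-1}b_k\overline{z}^k$; evaluating at $z=re^{i\varphi}$ and comparing, for fixed $r>0$, the two available Fourier expansions in $\varphi$ of the entire function $g$ shows that each $b_kr^k$ appears as the coefficient of $e^{-ik\varphi}$ with $k\geq 1$, hence vanishes. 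Either way $f(z)=a_0+a_1z+\cdots+a_{N-1}z^{N-1}$.

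For the converse, every polynomial in $z$ of degree $\leq N-1$ is entire and lies in $S_{KNW}$: by Claim~3 of the proof of the previous theorem $z^{N-1}\in S_{KNW}$, hence $A_{[(N-1,0)]}\subseteq S_{KNW}$, and $A_{[(N-1,0)]}$ contains $\spn_{\mathbb{C}}\{1,z,\ldots,z^{N-1}\}$. Combining the two directions, the entire solutions of $\eqref{KNW}$ are exactly the complex polynomials of degree $\leq N-1$. I do not expect a genuine obstacle here; the only point calling for a little care is the passage from holomorphy of $f$ to the vanishing of the $b_k$, which is why I would make that step explicit rather than merely assert it.
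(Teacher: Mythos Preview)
Your argument is correct and is exactly the deduction the paper intends: the corollary is stated without proof, being an immediate consequence of the preceding theorem via the observation that an entire function in $S_{KNW}$ cannot carry any $\overline{z}^k$ terms. Your explicit verification of this last point (via $\partial/\partial\overline{z}$ or the Fourier argument) is fine and makes the step transparent.
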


\begin{corollary}
Let $f\in C(\mathbb{C},\mathbb{R})$. Then $f$ is a solution of $\eqref{KNW}$ if and only if $f(x+iy)=p(x,y)$ is an harmonic polynomial of degree $\leq N-1$.  (Here $x,y$ denote real variables).
\end{corollary}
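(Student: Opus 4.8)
The plan is to deduce this corollary directly from the preceding theorem, which already identifies $S_{KNW}$ inside $C(\mathbb{C},\mathbb{C})$. The point is that a function $f\in C(\mathbb{C},\mathbb{R})$ solves \eqref{KNW} if and only if, regarded as a complex-valued function, it belongs to $S_{KNW}$; hence everything reduces to describing the real-valued members of $\mathbf{span}_{\mathbb{C}}\{1,z,\dots,z^{N-1},\overline{z},\dots,\overline{z}^{N-1}\}$ and recognizing that set as the space of real harmonic polynomials in $x,y$ of degree $\le N-1$.

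For the forward implication I would take an arbitrary $f\in S_{KNW}$, write $f(z)=\sum_{i=0}^{N-1}a_iz^i+\sum_{j=1}^{N-1}b_j\overline{z}^j$ according to the theorem, and impose $f=\overline{f}$. Since the monomials $z^i\overline{z}^j$ with distinct exponent pairs are linearly independent in $C(\mathbb{C},\mathbb{C})$, comparing coefficients forces $a_0\in\mathbb{R}$ and $b_j=\overline{a_j}$ for $1\le j\le N-1$. Thus $f=a_0+\sum_{j=1}^{N-1}\big(a_jz^j+\overline{a_j}\,\overline{z}^j\big)=a_0+2\sum_{j=1}^{N-1}\mathrm{Re}(a_jz^j)$. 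Each $\mathrm{Re}(a_jz^j)$ is the real part of an entire function, hence a harmonic polynomial in the real variables $x,y$, homogeneous of degree $j$; a real constant is trivially harmonic; so $f(x+iy)$ is a real harmonic polynomial of degree $\le N-1$.

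For the converse I would invoke the classical fact that, in two real variables, every real harmonic polynomial $p(x,y)$ is the real part $\mathrm{Re}(q(z))$ of an analytic polynomial $q$, with $\deg q=\deg p$ (the harmonic conjugate exists globally on $\mathbb{C}$ and is itself a polynomial, and matching the top homogeneous parts shows the degrees coincide). Writing $q(z)=\sum_{i=0}^{N-1}c_iz^i$ gives $p=\tfrac12\sum_{i=0}^{N-1}c_iz^i+\tfrac12\sum_{i=0}^{N-1}\overline{c_i}\,\overline{z}^i$, which lies in $S_{KNW}$ by the theorem; since $p$ is also continuous and real-valued, it is a continuous real solution of \eqref{KNW}.

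The genuinely nontrivial computational content (the root-of-unity and multinomial identities) is already discharged in the proof of the theorem, so the only real input here is the harmonic-conjugate representation of planar harmonic polynomials together with the degree bookkeeping; that is the step I would treat most carefully, although it is entirely classical. As a consistency check one may note $1+2(N-1)=2N-1=\dim_{\mathbb{C}}S_{KNW}$, matching the real dimension of the space of real harmonic polynomials of degree $\le N-1$.
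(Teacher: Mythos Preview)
Your argument is correct. The paper states this corollary without proof, treating it as an immediate consequence of the preceding theorem identifying $S_{KNW}$; your write-up supplies exactly the natural details one would expect: intersect $S_{KNW}$ with the real-valued functions by imposing $f=\overline{f}$, read off $a_0\in\mathbb{R}$ and $b_j=\overline{a_j}$ from the linear independence of the monomials $z^i\overline{z}^j$, and identify the resulting span with the real harmonic polynomials of degree $\le N-1$ via the harmonic-conjugate representation. There is no alternative route hidden in the paper to compare against; your proof is the intended one, carried out carefully.
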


%What can be said for the Nagumo equation $\eqref{N}$?

\section{Continuous solutions of Fr\'{e}chet functional equation}

In this section we use  Theorem \ref{P} to compute the solutions $f$ of  the classical Fr\'{e}chet's functional equation,
\begin{equation} \label{FE}
\Delta^{N}_hf(x)=\sum_{k=0}^N\binom{N}{k}(-1)^{N-k}f(x+kh)=0 (x,h \in \mathbb{R}^d), \ \ \text{(Fr\'{e}chet equation with fixed step),}
\end{equation}
under the additional hypothesis that  $f\in C(\mathbb{R}^d,\mathbb{R})$. This equation was introduced in the literature by M. Fr\'{e}chet in 1909, for continuous functions $f:\mathbb{R}\to\mathbb{R}$,  as a particular case of the functional equation  
\begin{equation}\label{fre}
\Delta_{h_1h_2\cdots h_{N}}f(x)=0 \ \ (x,h_1,h_2,\dots,h_{N}\in \mathbb{R}), 
\end{equation}
where $\Delta_{h_1h_2\cdots h_s}f(x)=\Delta_{h_1}\left(\Delta_{h_2\cdots h_s}f\right)(x)$, $s=2,3,\cdots$. In particular, after Fr\'{e}chet's 
seminal paper \cite{frechet}, the solutions of \eqref{fre} are named ``polynomials'', since it is known that, under very mild regularity conditions on $f$, if $f:\mathbb{R}\to\mathbb{R}$ satisfies \eqref{fre}, then $f(x)=a_0+a_1x+\cdots a_{N-1}x^{N-1}$ for all $x\in\mathbb{R}$ and certain constants $a_i\in\mathbb{R}$. For example, in order to have this property, it is enough for $f$ being locally bounded \cite{frechet}, \cite{almira_antonio}, but there are stronger results \cite{ger1}, \cite{kuczma1}, \cite{mckiernan}, \cite{popa_rasa}. The equation \eqref{fre} can be studied for functions $f:X\to Y$  whenever $X, Y$ are two  $\mathbb{Q}$-vector spaces and the variables $x,h_1,\cdots,h_{N}$ are assumed to be elements of $X$:
\begin{equation}\label{fregeneral}
\Delta_{h_1h_2\cdots h_{N}}f(x)=0 \ \ (x,h_1,h_2,\dots,h_{N}\in X) \ \ \text{(Fr\'{e}chet equation with variable step).}
\end{equation}
In this context, the general solutions of \eqref{fregeneral} are characterized as functions of the form $f(x)=A_0+A_1(x)+\cdots+A_{N-1}(x)$, where $A_0$ is a constant and $A_k(x)=A^k(x,x,\cdots,x)$ for a certain $k$-additive symmetric function $A^k:X^k\to Y$ (we say that $A_k$ is the diagonalization of $A^k$). In particular, if $x\in X$ and $r\in\mathbb{Q}$, then $f(rx)=A_0+rA_1(x)+\cdots+r^{N-1}A_{N-1}(x)$. Furthermore, it is known that $f:X\to Y$ satisfies \eqref{fregeneral} if and only if it satisfies 
\begin{equation}\label{frepasofijo}
\Delta_{h}^{N}f(x):=\sum_{k=0}^{N}\binom{N}{k}(-1)^{N-k}f(x+kh)=0 \ \ (x,h\in X).
\end{equation}
A proof of this fact follows directly from Djokovi\'{c}'s Theorem \cite{Dj} (see also \cite[Theorem 7.5, page 160]{HIR}, \cite[Theorem 15.1.2., page 418]{kuczma}), which states that  the operators $\Delta_{h_1 h_2\cdots h_s}$ satisfy the equation
\begin{equation}\label{igualdad}
\Delta_{h_1\cdots h_s}f(x)=
\sum_{\epsilon_1,\dots,\epsilon_s=0}^1(-1)^{\epsilon_1+\cdots+\epsilon_s}
\Delta_{\alpha_{(\epsilon_1,\dots,\epsilon_s)}(h_1,\cdots,h_s)}^sf(x+\beta_{(\epsilon_1,\dots,\epsilon_s)}(h_1,\cdots,h_s)),
\end{equation}
where $$\alpha_{(\epsilon_1,\dots,\epsilon_s)}(h_1,\cdots,h_s)=(-1)\sum_{r=1}^s\frac{\epsilon_rh_r}{r}$$ and $$\beta_{(\epsilon_1,\dots,\epsilon_s)}(h_1,\cdots,h_s)=\sum_{r=1}^s\epsilon_rh_r.$$  

In this section we impose $f\in C(\mathbb{R}^d,\mathbb{R})$ and, with very different arguments to those originally used by Fr\'{e}chet, we give a new proof of 
Fr\'{e}chet's  result for this case. Furthermore, we use Djokovi\'{c}'s Theorem to simplify the computations. 

\begin{theorem} Let $S_F$ denote the set of functions  $f\in C(\mathbb{R}^d,\mathbb{R})$ which solve $\eqref{FE}$ (or, equivalently, $\eqref{fregeneral}$ with $X=\mathbb{R}^d$, $Y=\mathbb{R}$). Then
\[
S_F=\Pi_{N-1}^d.
\]
\end{theorem}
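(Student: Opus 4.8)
The plan is to follow exactly the pattern of Section~2, replacing the Sternfeld--Weit classification by Pinkus' Theorem~\ref{P}. First I would check that $S_F$ is a closed, dilation-and-translation invariant subspace of $C(\mathbb{R}^d,\mathbb{R})$. Closedness and linearity are immediate from $S_F=\bigcap_{h\in\mathbb{R}^d}\ker(\Delta_h^N)$, each $\Delta_h^N$ being a continuous linear endomorphism of $C(\mathbb{R}^d,\mathbb{R})$. For invariance under the maps $S_{a,b}(f)(x)=f(a\cdot x+b)$, the computation
\[
\Delta_h^N\!\big(S_{a,b}(f)\big)(x)=\sum_{k=0}^N\binom{N}{k}(-1)^{N-k}f\big(a\cdot x+b+k(a\cdot h)\big)=S_{a,b}\!\big(\Delta_{a\cdot h}^{N}f\big)(x)
\]
shows $\Delta_h^N\circ S_{a,b}=S_{a,b}\circ\Delta_{a\cdot h}^{N}$, so that $f\in S_F$ forces $\Delta_h^{N}(S_{a,b}f)=0$ for every $h$, i.e. $S_{a,b}(f)\in S_F$. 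Theorem~\ref{P} then gives $S_F=\overline{\mathbf{span}_{\mathbb{R}}\big(\bigcup_{k=1}^{r}B_{[n_k]}\big)}$ for a finite set $\{n_k\}_{k=1}^r\subseteq(\widehat{\mathbb{N}})^d$, and the remaining task is to decide which monomials $x^{\gamma}$ belong to $S_F$.

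The next step is a pair of elementary finite-difference computations. On the one hand, $\Pi_{N-1}^d\subseteq S_F$: if $|\beta|\le N-1$ then, for fixed $x,h$, the map $k\mapsto(x+kh)^{\beta}$ is a polynomial in $k$ of degree at most $|\beta|<N$, and the $N$-th forward difference in $k$ of such a polynomial vanishes, so $\Delta_h^{N}(x^{\beta})\equiv 0$ for every $h$. On the other hand, and this is the computation that drives the argument, for $|\gamma|=N$ the map $k\mapsto(x+kh)^{\gamma}=\prod_{j=1}^{d}(x_j+kh_j)^{\gamma_j}$ is a polynomial in $k$ whose coefficient of $k^{N}$ equals $\prod_{j}h_j^{\gamma_j}=h^{\gamma}$, whence $\Delta_h^{N}(x^{\gamma})=N!\,h^{\gamma}$; choosing $h=(1,\dots,1)$ gives $\Delta_h^{N}(x^{\gamma})=N!\neq 0$, so no monomial of total degree $N$ lies in $S_F$.

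To finish I would exploit that $x^{\beta}\in B_{[n_k]}\subseteq S_F$ for every $\beta\le n_k$. If some $n_k$ satisfied $|n_k|\ge N$ --- with the convention that a coordinate equal to $+\infty$ makes $|n_k|=+\infty$ --- then lowering its coordinates one unit at a time produces $\gamma\le n_k$ with $|\gamma|=N$, hence $x^{\gamma}\in S_F$, contradicting the previous paragraph. Therefore every $n_k$ lies in $\mathbb{N}^d$ with $|n_k|\le N-1$, so $\bigcup_{k=1}^{r}B_{[n_k]}\subseteq\Pi_{N-1}^d$; since $\Pi_{N-1}^d$ is finite-dimensional, hence closed, this yields $S_F\subseteq\Pi_{N-1}^d$, and together with $\Pi_{N-1}^d\subseteq S_F$ we conclude $S_F=\Pi_{N-1}^d$.

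I expect the only genuinely computational ingredient to be the identity $\Delta_h^{N}(x^{\gamma})=N!\,h^{\gamma}$ for $|\gamma|=N$ (equivalently, the way $\Delta_h$ lowers the total degree of a homogeneous polynomial); everything else is the same bookkeeping already used for the Sternfeld--Weit arguments in Section~2, the one extra subtlety --- also settled in the last step --- being the need to rule out corners $n_k$ with infinite coordinates, which is exactly where the degree-$N$ computation is applied. Note that this route uses only the fixed-step equation $\eqref{FE}$ for all $h\in\mathbb{R}^d$, so Djokovi\'{c}'s theorem is needed only if one prefers to start instead from the variable-step form.
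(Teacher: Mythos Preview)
Your argument is correct and follows the same overall architecture as the paper: verify that $S_F$ is closed and $S_{a,b}$-invariant, invoke Pinkus' classification, and then test monomials. The invariance computation and the inclusion $\Pi_{N-1}^d\subseteq S_F$ are handled essentially as in the paper.

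The one substantive difference lies in the exclusion step. The paper establishes $x^{\alpha}\notin S_F$ for $|\alpha|\ge N$ by passing through Djokovi\'{c}'s theorem to the variable-step equation and then evaluating the mixed difference
\[
\Delta_{e_1}^{\alpha_1}\cdots\Delta_{e_d}^{\alpha_d}(x^{\alpha})=\alpha_1!\cdots\alpha_d!\neq 0,
\]
which is an operator of order $|\alpha|\ge N$ in the variable-step sense. You instead stay entirely within the fixed-step equation: observing that $k\mapsto (x+kh)^{\gamma}$ is a polynomial in $k$ with leading coefficient $h^{\gamma}$, you read off $\Delta_h^{N}(x^{\gamma})=N!\,h^{\gamma}$ directly and specialize to $h=(1,\dots,1)$. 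This is more elementary --- it removes the dependence on Djokovi\'{c}'s identity altogether, as you note --- and it also yields the exact value of $\Delta_h^{N}(x^{\gamma})$ rather than just nonvanishing. The paper's route, on the other hand, makes the coordinate-by-coordinate structure of the difference operator transparent and connects the result to the general variable-step theory. Either computation suffices; yours is the shorter one here.
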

\begin{proof}
We first prove that $S_F$ is a closed subspace of $C(\mathbb{R}^d,\mathbb{R})$ which is invariant by the transformations of the form $S_{a,b}(f)(x)=f(a\cdot x+b)$ for all $a,b\in\mathbb{R}^d$. Obviously, for each $h\in\mathbb{R}^d$, $\Delta_h^N:C(\mathbb{R}^d,\mathbb{R})\to C(\mathbb{R}^d,\mathbb{R})$ is a continuous linear operator, so that $S_F=\bigcap_{h\in \mathbb{R}^d}\ker \Delta_h^N$ is a closed linear subspace of $C(\mathbb{R}^d,\mathbb{R})$. Let us now take $a,b\in\mathbb{R}^d$ and assume that $f\in S_F$. Then, for each $h\in\mathbb{R}^d$ we have that
\begin{eqnarray*}
\Delta_h^N(S_{a,b}(f)) &=& \sum_{k=0}^N\binom{N}{k}(-1)^{N-k}S_{a,b}(f)(x+kh)\\
&=& \sum_{k=0}^N\binom{N}{k}(-1)^{N-k}f(a\cdot x+b+ k(a\cdot h)) \\
&=& \Delta_{a\cdot h}^N(f)(a\cdot x+b)\\
&=&S_{a,b}( \Delta_{a\cdot h}^N(f))(x) =0, 
\end{eqnarray*}
so that $S_{a,b}(f)\in S_F$, and $S_{a,b}(S_F)\subseteq S_F$, which is what we wanted to prove. It follows from Theorem \ref{P} that 
\[
S_F=\overline{\mathbf{span}_{\mathbb{R}}\left(\bigcup_{k=1}^rB_{[n_k]}\right)}
\] 
for a certain finite set of points $\{n_k=(n_{k,1},\cdots,n_{k,d})\}_{k=1}^r\subseteq(\widehat{\mathbb{N}})^d$. 

A direct computation shows that, if $f_i(x)=x_i^N$ and $e_i=(0,0,\cdots, 1^{(i\text{-th position})},0,\cdots,0)$, then $ \Delta_{e_i}^N(f_i)(x)=N!\neq 0$, so that $f_i\not\in S_F$, $i=1,\cdots,d$. Thus, if $\alpha=(\alpha_1,\cdots,\alpha_d)\in\mathbb{N}^d$ satisfies $\max_{1\leq i\leq d}\alpha_i\geq N$, then $x^{\alpha}\not\in S_F$ since otherwise we should have that $(0,0,\cdots, N^{(i\text{-th position})},0,\cdots,0)\leq \alpha$ for some $i$, so that $x_i^N\in S_F$ for some $i$, which is false.This proves that all elements of $S_F$ are ordinary polynomials. Indeed, $S_F\subseteq \mathbf{span}_{\mathbb{R}}\{x^\alpha:\max\alpha_i\leq N-1\}$. We want to show a deeper result, since we want to identify completely $S_F$. Concretely, we prove that $S_F=\Pi_{N-1}^d$, the space of polynomials with total degree $\leq N-1$. We divide the proof in two steps:

\noindent \textbf{Step 1: $\Pi_{N-1}^d\subseteq S_F$.} This is a direct consequence of the fact that the operator $\Delta_h$ reduces the total degree of a polynomial in at least one unity, since
\begin{eqnarray*}
\Delta_hx^{\alpha} &=& (x+h)^{\alpha}-x^{\alpha} =\prod_{i=1}^d(x_i+h_i)^{\alpha_i} -\prod_{i=1}^d x_i^{\alpha_i} \\
&=& \prod_{i=1}^d(x_i^{\alpha_i}+\sum_{t_i=0}^{\alpha_i-1}\binom{\alpha_i}{t_i}h_i^{\alpha_i-t_i}x_i^{t_i}) -\prod_{i=1}^d x_i^{\alpha_i}\in \Pi_{|\alpha|-1}^d.
\end{eqnarray*}

\noindent \textbf{Step 2: $S_F \subseteq \Pi_{N-1}^d$.} To prove this result, we use Djokovi\'{c}'s Theorem or, what is the same, the equivalence of the equations $\eqref{FE}$  and $\eqref{fregeneral}$ (with $X=\mathbb{R}^d$, $Y=\mathbb{R}$). Obviously, due to the structure of $S_F$, it is enough to prove that, if $|\alpha|\geq N$, then $x^\alpha\not\in S_F$. 

Let $i\in \{1,\cdots,d\}$ be fixed and let $e_i=(0,0,\cdots, 1^{(i\text{-th position})},0,\cdots,0)$, $\alpha=(\alpha_1,\cdots,\alpha_d)\in\mathbb{N}^d$, and $f_{\alpha}(x)=x^\alpha$. Then, an easy computation shows that, for each $k\in\mathbb{N}$, 
\[
\Delta_{e_i}^kf_{\alpha}(x)= (\Delta_{1}^kx_i)x_1^{\alpha_1}x_2^{\alpha_2}\cdots x_{i-1}^{\alpha_{i-1}}x_{i+1}^{\alpha_{i+1}}\cdots x_{d}^{\alpha_{d}}.
\]
Furthermore, it is well known that the operators $\Delta_h$, $\Delta_k$ commute under composition, so that, if $|\alpha|\geq N$, then 
\[
\Delta_{e_1}^{\alpha_1}\Delta_{e_2}^{\alpha_2}\cdots \Delta_{e_d}^{\alpha_d}(f_{\alpha})(x) = 
(\Delta_{1}^{\alpha_1}x_1^{\alpha_1}) (\Delta_{1}^{\alpha_2}x_2^{\alpha_2}) \cdots (\Delta_{1}^{\alpha_d}x_d^{\alpha_d})= \alpha_1!\alpha_2!\cdots\alpha_d!\neq 0, 
 \]
which implies that $x^{\alpha}$ does not vanishes identically for the operators of the form $\Delta_{h_1h_2\cdots h_{s}}$ with $s=|\alpha|\geq N$. In particular, $x^{\alpha}\not\in S_F$, which is what we wanted to prove.
\end{proof}

 \bibliographystyle{amsplain}

%%%  ==============================================================

\bigskip

\footnotesize{J. M. Almira and Kh. F. Abu-Helaiel

Departamento de Matem\'{a}ticas. Universidad de Ja\'{e}n.

E.P.S. Linares,  C/Alfonso X el Sabio, 28

23700 Linares (Ja\'{e}n) Spain

Email: jmalmira@ujaen.es; kabu@ujaen.es }

%Phone: (34)+ 953648503

%Fax: (34)+ 953648575}

%Phone: (34)+ 953648503

%Fax: (34)+ 953648575}

\end{document}